\newtheorem{theorem}{Theorem}[section]
\newtheorem{proposition}[theorem]{\sf Proposition}
\newtheorem{lemma}[theorem]{\sf Lemma}
\newtheorem{definition}[theorem]{\sf Definition}
\newtheorem{corollary}[theorem]{\sf Corollary}
\newtheorem{remark}[theorem]{\sf Remark}
\def\C{\mathbb C}
\def \Z{\mathbb Z}
\def \P{\mathbb P}
\def \L{\mathbb L}
\def \ra{\rightarrow}
\def \ra{\rightarrow}
\let\olddefinition\definition
\renewcommand{\definition}{\olddefinition\normalfont}
\let\oldremark\remark
\renewcommand{\remark}{\oldremark\normalfont}
\title{An example for Kuznetsov-Shinder conjecture}
\author{Tanya Kaushal Srivastava}
\address{Indian Institute of Technology Gandhinagar, Gandhinagar-382355, India.}
\email{ks.tanya@iitgn.ac.in}
\date{\today}
\begin{document}

\maketitle
\begin{abstract}
    We give an example for the Kuznetsov-Shinder conjecture, with infinitely many non-isomorphic but L-equivalent varieties.
\end{abstract}

\section{The Grothendieck ring of varieties}

Let $k$ be a field of characteristic zero. One define the Grothendieck ring of varieties over $k$ as the ring generated by isomorphism classes $[X]$ of algebraic varieties $X$ over $k$ with relations $[X]=[Z]+[U]$ for every closed subvariety $Z \subset X$ having open complement $U \subset X$. The product structure of this ring is induced by the products of varieties. The ring is denoted by $K_0(Var/k)$. The ring has a unit element given by the equivalence class of a point $[Spec (k)]$. This ring was introduced by Grothendieck in his correspondence with Serre \cite{Grothendieck-serre}. Results of Bittner \cite{Bittner} and Looijenga \cite{Looi} give an alternate description of this ring, showing that the ring is generated by isomorphism classes of smooth projective varieties subject to the relation: $[X] + [Z] = [Y] + [E]$, whenever $Y$ is smooth projective, $Z \subset Y$ is a smooth subvariety, $X$ is a blow-up of $Y$ along $Z$, and $E \subset X$ is the exceptional divisor. The following lemmas are very useful and  easy consequences of the definition of Grothendieck ring of varieties. 

\begin{lemma} \cite[Lemma 2.1]{KS}
    Assume $M \ra S$ is a Zariski locally trivial fibration with fiber $F$. Then $[M]=[S][F]$ in $K_0(Var/k)$. 
\end{lemma}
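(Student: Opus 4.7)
The plan is to prove the identity $[M] = [S][F]$ by Noetherian induction on $S$, exploiting the scissor relation and the fact that a Zariski locally trivial fibration restricts to a Zariski locally trivial fibration over any locally closed subscheme.

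First, I would reduce to the trivial case on an open piece. Since the fibration is Zariski locally trivial, there exists a nonempty Zariski open subset $U \subset S$ over which $M$ becomes a product, i.e.\ $M|_U := M \times_S U \cong U \times F$. Directly from the product structure in $K_0(Var/k)$, this gives $[M|_U] = [U][F]$.

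Next, I would handle the complement. Let $Z = S \setminus U$ with its reduced induced structure; then $Z$ is a proper closed subvariety of $S$, and $M|_Z := M \times_S Z \to Z$ is again a Zariski locally trivial fibration with fiber $F$ (pulling back the trivializing cover of $S$ yields one for $Z$). The scissor relations give
\begin{equation*}
    [M] = [M|_U] + [M|_Z], \qquad [S] = [U] + [Z].
\end{equation*}
Substituting $[M|_U] = [U][F]$ reduces the lemma to showing $[M|_Z] = [Z][F]$.

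Now I would invoke Noetherian induction on $S$: the base case is $S = \emptyset$, where the claim is trivial, and the inductive hypothesis applied to $Z$ (which has strictly smaller dimension or, more generally, is a proper closed subvariety) yields $[M|_Z] = [Z][F]$. Combining,
\begin{equation*}
    [M] = [U][F] + [Z][F] = ([U] + [Z])[F] = [S][F],
\end{equation*}
as required. The main technical point to be careful about is that the stratification argument terminates; this is precisely where the Noetherian property of the underlying topological space of a variety is used, so there is no real obstacle. The only mildly delicate step is verifying that $M|_Z \to Z$ inherits the Zariski local triviality, which is immediate from pullback of a trivializing open cover.
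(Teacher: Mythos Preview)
Your argument is correct and is the standard proof of this fact: stratify $S$ into locally closed pieces over which the fibration trivializes, and use the scissor relation together with Noetherian induction. There is nothing to compare against here, since the paper does not supply its own proof but simply cites \cite[Lemma~2.1]{KS}; the argument there is exactly the Noetherian-induction/stratification approach you have written.
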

    
\begin{lemma} \label{lemma2}
   In the Grothendieck ring of varieties $K_0(Var/k)$, we have: 
   \begin{enumerate}
       \item $[\emptyset] = 0$
       \item $[Spec \ k] = 1$
       \item $[\P^n]= 1 + \L + \ldots + \L^n$, where $\L= [\mathbb{A}^1]$.
   \end{enumerate}
\end{lemma}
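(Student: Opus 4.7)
The plan is to derive all three statements directly from the scissor relation $[X] = [Z] + [U]$ together with the multiplicative structure on $K_0(Var/k)$.

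For (1), I would pick any variety $X$ (for instance $\Spec k$ itself) and apply the defining relation with the closed subvariety $Z = X$ and open complement $U = \emptyset$. This gives $[X] = [X] + [\emptyset]$, from which $[\emptyset] = 0$ follows by cancellation in the underlying abelian group. No inductive or geometric input is required; the whole content is just that the empty subscheme is simultaneously closed and open in any variety.

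For (2), there is essentially nothing to prove: the ring structure on $K_0(Var/k)$ is set up so that $[\Spec k]$ acts as the unit, since $X \times_k \Spec k \cong X$ for every $k$-variety $X$, and the product on classes is induced by the product on varieties. I would just note this, referencing the ring axioms recalled in the preceding paragraph.

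For (3), the strategy is induction on $n$, using the standard stratification of projective space by a hyperplane and its affine complement. Choose a hyperplane $\P^{n-1} \subset \P^n$; it is closed with open complement $\mathbb{A}^n$, so the scissor relation gives $[\P^n] = [\P^{n-1}] + [\mathbb{A}^n]$. The auxiliary fact I would use is $[\mathbb{A}^n] = \L^n$, obtained from the iterated product decomposition $\mathbb{A}^n \cong \mathbb{A}^1 \times_k \cdots \times_k \mathbb{A}^1$ and the multiplicativity of $[\cdot]$ on products. The base case $[\P^0] = [\Spec k] = 1$ is handled by (2), and the inductive step then yields $[\P^n] = 1 + \L + \cdots + \L^n$.

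None of these steps is genuinely an obstacle; the only mildly subtle point is being careful, in part (1), to use the scissor relation with $U = \emptyset$ rather than circularly invoking $[\emptyset] = 0$ in the product formula $[X] \cdot [\emptyset] = [\emptyset]$. Once (1) is established, parts (2) and (3) are straightforward bookkeeping.
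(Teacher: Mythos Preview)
Your argument is correct and is exactly the standard one; the paper itself does not give a proof but simply remarks that the lemma ``is very easy to prove'' and refers the reader to Musta\c{t}\u{a}'s lecture notes \cite{Kap}, where precisely this scissor-relation induction for $[\P^n]$ appears. There is nothing to add.
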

The above lemma is very easy to prove, see for example \cite{Kap}.

Poonen has shown that the ring $K_0(Var/k)$ is not a domain \cite{Ponnen}. Let us denote by $\mathbb{L}$ the equivalence class of the affine line. It has been further shown that $\mathbb{L}$ is a zero-divisor in the ring $K_0(Var/k)$ \cite{Lzero}, \cite{Lzero2}, \cite{Lzero3}. 

\begin{definition}
 Let $X$ and $Y$ be smooth projective varieties. They are called \textbf{$D$-equivalent} if their derived categories $D^b(X)$ and $D^b(Y)$ of bounded complexes of coherent sheaves are equivalent as triangulated categories, i.e., there exists an equivalence of categories $\phi : D^b(X) \ra D^b(Y)$ which commutes with the translations and sends any distinguished triangle to a distinguished triangle. 
\end{definition}

\begin{definition}
    Two algebraic varieties $X$ and $Y$ are called \textbf{$L$-equivalent} if the differences $[X]-[Y]$ vanishes in the localization $K_0(Var_{\C})[\L^{-1}]$. Equivalently, for some $n >0$ we have $([X]-[Y]).\L^n =0$. 
\end{definition}

\textbf{Question(Kuznetsov-Shinder)}: Assume X and Y are $D$-equivalent smooth projective varieties. What is the relation between the classes $[X]$  and $[Y]$ in the Grothendieck ring $K_0(Var/k)$? 

A partial answer to the above question of theirs was conjectured by them. 

\textbf{Kuznetsov-Shinder Conjecture:} If $X$ and $Y$ are smooth projective simply connected varieties such that $D(X)$ and $D(Y)$, then there is a non-negative integer $r \geq 0$ such that $([X]-[Y])\mathbb{L}^r = 0$. Equivalently, $D$-equivalence of simply connected varieties implies their $L$-equivalence. 

\begin{remark}
   The condition of being simply connected is necessary. Efimov \cite{Efimov} and independently Ito et. al \cite{Ito}, showed that there exist derived equivalent Abelian varieties over $\C$ which are not $L$-equivalent.  
\end{remark}

For a list of known positive cases of Kusnetsov-Shinder conjecture, we refer the reader to the survey by Ito et. al \cite{Ito}.

\subsection{An example in support: Rational 3-folds}
Lesieutre \cite{Lesi} gave an example of a rational threefold to show that a smooth projective variety can have infinitely many Fourier-Mukai partners, i.e., there are infinitely many non-isomorphic smooth projective varieties (all rational threefolds) whose derived category of coherent sheaves are all isomorphic. 

\begin{theorem}\cite{Lesi} Let $\textbf{p}$ denote an ordered $8$-tuple of distinct points in $\P^3$, and let $X_{\textbf{p}}$ be the blow-up of $\P^3$ at the points of $\textbf{p}$. There is an infinite set $W$ of configurations of $8$ points in $\P^3$ such that if $\textbf{p}$ and $\textbf{q}$ are distinct elements of $W$, then $D(X_{\textbf{p}}) \cong D(X_{\textbf{q}})$ but  are not isomorphic. 
\end{theorem}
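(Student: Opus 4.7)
The plan is to realize the infinite set $W$ as an orbit of a carefully chosen birational operation on $8$-point configurations, and to produce the derived equivalences along this orbit by decomposing the associated birational maps into three-fold flops; Bridgeland's theorem (flops between smooth projective $3$-folds induce Fourier-Mukai equivalences of bounded derived categories) then supplies the isomorphisms $D(X_{\mathbf{p}}) \cong D(X_{\mathbf{q}})$.

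First, I would identify a Cremona-type self-map of $\P^3$ built from the tuple $\mathbf{p}$: a natural candidate is the transformation whose linear system consists of quadrics through a suitable $5$-element subset of the $8$ points. Such a map lifts, after blowing up $\mathbf{p}$ and its image $\mathbf{p}'$, to a birational map $\phi: X_{\mathbf{p}} \dashrightarrow X_{\mathbf{p}'}$. The crucial local analysis is to show that $\phi$ is an isomorphism in codimension one and factors as a finite sequence of standard Atiyah flops along $(-1,-1)$-curves on $X_{\mathbf{p}}$; this requires tracking the strict transforms of lines through pairs and conics through quintuples of the chosen points, and verifying that they are smooth rational $K$-trivial flopping curves with the correct normal data. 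Each such flop is a derived equivalence by Bridgeland, hence $D(X_{\mathbf{p}}) \cong D(X_{\mathbf{p}'})$.

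Next, I would iterate. Varying the $5$-element subsets chosen and composing the resulting Cremona transformations produces, from any sufficiently generic $\mathbf{p}$, a sequence $\mathbf{p}_0 = \mathbf{p}, \mathbf{p}_1, \mathbf{p}_2, \ldots$ of $8$-tuples whose blow-ups are all derived equivalent to $X_{\mathbf{p}}$. To show that infinitely many terms are pairwise non-isomorphic, I would examine the induced action on the lattice $\Pic(X_{\mathbf{p}}) = \langle H, E_1, \ldots, E_8 \rangle$: each Cremona step acts as a reflection, and in this ``critical'' case of $8$ points in $\P^3$ these reflections generate a group of infinite order (in analogy with the affine Weyl group action arising for $9$ points in $\P^2$). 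Consequently the induced lattice automorphisms have infinite order, so the pseudoeffective and movable cones of the $X_{\mathbf{p}_i}$ differ from one another, ruling out isomorphisms between infinitely many of them. Collecting these configurations, restricted to a sufficiently generic $\mathbf{p}$ to avoid accidental coincidences, yields the desired infinite set $W$.

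The main obstacle is the local analysis in the second step, namely verifying that the Cremona-induced map decomposes genuinely as a sequence of Atiyah flops rather than triggering divisorial contractions, flips of a different type, or other operations that are not known to preserve the derived category. This amounts to a finite but delicate intersection-theoretic computation on $X_{\mathbf{p}}$: one must identify exactly which curves are contracted or re-expanded at each stage and certify that their normal bundles are $\mathcal{O}(-1)\oplus\mathcal{O}(-1)$. Once this local picture is in place, Bridgeland's flop theorem supplies the derived equivalences automatically, and the infinite-order lattice action delivers the non-isomorphism statement.
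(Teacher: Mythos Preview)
The paper under review does not supply a proof of this theorem at all; it is quoted as a black box from Lesieutre \cite{Lesi}. There is therefore no ``paper's own proof'' to compare against. Your proposal is, in outline, exactly Lesieutre's original argument: a Cremona transformation based at a subset of the eight points lifts to a pseudo-automorphism of the blow-up that factors as a composition of Atiyah flops, Bridgeland's theorem converts each flop into a Fourier--Mukai equivalence, and the infinite-order action of these Cremona reflections on $\Pic(X_{\mathbf p})$ (which in the case of eight points in $\P^3$ generate a Coxeter group of affine type) forces infinitely many of the resulting varieties to be pairwise non-isomorphic.

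A couple of details in your sketch are off and would derail the local analysis you flag as the main obstacle. The standard Cremona involution of $\P^3$ is based at \emph{four} of the eight points, not five, and its linear system consists of \emph{cubics} vanishing along the six coordinate lines through pairs of those four points, not quadrics through five points (quadrics through five general points of $\P^3$ form a $4$-dimensional system and do not define a self-map of $\P^3$). Correspondingly, the flopping curves on $X_{\mathbf p}$ are precisely the strict transforms of those six lines, each with normal bundle $\mathcal{O}(-1)\oplus\mathcal{O}(-1)$; the ``conics through quintuples'' you mention do not enter. With these corrections the intersection-theoretic verification goes through, and the rest of your plan matches Lesieutre's paper.
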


A rational variety is simply connected (with the etale fundamental group being trivial or if defined over $\C$ then with the usual topological fundamental group) \cite[Corollary 4.18]{debarre}. In particular this example satisfies the assumptions of Kuznetsov-Shinder conjecture. We can easily show that this example provide support for the conjecture. The conjecture holds for it. 

\begin{proposition}
    Each pair of non-isomorphic derived equivalent $X_{\textbf{p}}$ and $X_{\textbf{q}}$ have the same class in the Grothendieck ring of varieties. In particular, they are $\L$-equivalent.  
\end{proposition}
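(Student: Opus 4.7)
The plan is to compute the class $[X_{\mathbf{p}}]$ in $K_0(Var/\C)$ directly and observe that it depends only on the number of points blown up, not on their configuration. Since we are blowing up $\P^3$ at a finite set of $8$ distinct (reduced) points $\mathbf{p} = \{p_1, \ldots, p_8\}$, I can apply the Bittner-style blow-up relation recalled in the introduction: if $X$ is the blow-up of a smooth projective $Y$ along a smooth subvariety $Z$ with exceptional divisor $E$, then $[X] + [Z] = [Y] + [E]$.

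First I would blow up the points one at a time; since the points are distinct, each step is the blow-up of a smooth projective threefold at a single (reduced) point, and the exceptional divisor of such a blow-up is $\P^2$. Thus each step adds $[\P^2] - [\Spec k]$ to the class. Applying this eight times gives
\begin{equation*}
  [X_{\mathbf{p}}] \;=\; [\P^3] + 8\bigl([\P^2] - 1\bigr) \;=\; (1 + \L + \L^2 + \L^3) + 8(\L + \L^2),
\end{equation*}
using Lemma \ref{lemma2}(3). Alternatively, one gets the same formula at once by writing the cut-and-paste decomposition $X_{\mathbf{p}} = (\P^3 \setminus \mathbf{p}) \sqcup \bigsqcup_{i=1}^{8} E_i$ with each $E_i \cong \P^2$.

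The crucial observation is that this expression is manifestly independent of the configuration $\mathbf{p}$: it only depends on the fact that $\mathbf{p}$ consists of $8$ distinct points. Therefore, for any two configurations $\mathbf{p}, \mathbf{q} \in W$ we have $[X_{\mathbf{p}}] = [X_{\mathbf{q}}]$ in $K_0(Var/\C)$, and in particular $([X_{\mathbf{p}}] - [X_{\mathbf{q}}])\,\L^n = 0$ for every $n \geq 0$ (trivially, since the difference itself is zero). Thus they are $L$-equivalent in the strongest possible sense.

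There is essentially no obstacle here; the only point that requires care is to ensure that the iterated blow-up at distinct points is well-defined and that each successive center is a single reduced point in a smooth threefold, so that the exceptional divisor is indeed $\P^2$ at every stage. This is automatic because blowing up at a point disjoint from previous centers does not affect them. The argument uses the derived equivalence hypothesis nowhere, which is consistent with the conjecture: the $L$-equivalence is obtained for free from the common combinatorial/geometric structure of the two varieties.
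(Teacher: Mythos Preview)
Your argument is correct and follows essentially the same route as the paper: apply the Bittner blow-up relation to express $[X_{\mathbf p}]$ as a polynomial in $\L$ that visibly does not depend on the configuration $\mathbf p$, and conclude. Your arithmetic $[X_{\mathbf p}]=\L^3+9\L^2+9\L+1$ is in fact the right one---the paper's displayed computation drops the factor of $8$ on the exceptional divisor---but this slip is immaterial to the conclusion.
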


\begin{proof}
    From \cite[Lemma 1]{Lesi}, we know that $X_{\textbf{p}}$ and $X_{\textbf{q}}$ are isomorphic iff $\textbf{p}$ and $\textbf{q}$ coincide upto an automorphism of $\P^3$ and permutation of points. From the alternate description of $K_0(Var/k)$ using blowups (see \cite{Bittner} and \cite{Looi}), we can write 
    \begin{equation}
        [X_{\textbf{p}}] = [\P^3] -8[k] + [E],
    \end{equation}
    where $E$ is the exceptional divisor and $[k]$ is the equivalence class of a point, which is our unit element. The exceptional divisor of the blow up is Zariski locally isomorphic to $\{pt\} \times \P^2$ (\cite[Theorem II 8.24]{Hartshorne}). Thus, from the relation above and lemma \ref{lemma2}, we get
    \begin{eqnarray*}
        [X_{\textbf{p}}]  &= \L^3 + \L^2 + \L + 1 -8 + \L^2+ \L + 1 \\
                        & = \L^3 + 2\L^2 + 2\L - 6.
    \end{eqnarray*}
    Since this formula, does not depend on the configuration of the 8 points in $\P^3$, we see that all the varieties $X_{\textbf{p}}$ have the same class in the Grothendieck ring of varieties and hence are $\L$-equivalent. 
\end{proof}

\begin{remark}
    Note that the above example shows that there can be infinitely many isomorphism classes of varieties which have the same equivalence class in Grothendieck ring of varieties.
\end{remark}

\subsection{Grothendieck Ring of Varieties and Birationality}

One can study rationality problems in algebraic geometry using the Grothendieck ring of varieties. Consider the following lemma:
\begin{lemma}\cite[Lemma 2.1]{SG}
Let $X, X'$ be smooth birationally equivalent varieties of dimension $d$. Then we have an equality in the Grothendieck ring $K_0(Var/k)$: 
\begin{equation*}
    [X'] -[X] = \L .\mathcal{M}
\end{equation*}
where $\mathcal{M}$ is a linear combination of classes of smooth projective varieties of
dimension $d-2$.    
\end{lemma}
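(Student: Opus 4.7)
The plan is to reduce the lemma to the case of a single blowup along a smooth center, and then extract the factor of $\L$ directly from the projective-bundle structure of the exceptional divisor. By the Bittner--Looijenga presentation of $K_0(Var/k)$ recalled at the beginning of Section 1, the class $[X']-[X]$ of two smooth projective varieties of dimension $d$ is computable from any sequence of smooth blowups and blowdowns joining them. Such a sequence is provided by the weak factorization theorem of Abramovich--Karu--Matsuki--Włodarczyk (characteristic zero): any birational map between smooth projective varieties factors as a finite chain of blowups and blowdowns along smooth irreducible centers of codimension at least $2$, with all intermediate varieties smooth projective of dimension $d$. This factorization is the single nontrivial external input; everything else is a bookkeeping computation.

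For one step in such a chain, say a blowup $X_i \ra X_{i-1}$ along a smooth closed subvariety $Z_i \subset X_{i-1}$ of codimension $c_i \geq 2$, I would use the fact that the exceptional divisor $E_i = \P(\mathcal{N}_{Z_i/X_{i-1}}^{\vee})$ is a Zariski-locally trivial $\P^{c_i-1}$-bundle over $Z_i$. The first lemma of the excerpt together with Lemma \ref{lemma2} then gives $[E_i] = [Z_i](1 + \L + \ldots + \L^{c_i - 1})$, and the Bittner blowup relation yields
\begin{equation*}
[X_i] - [X_{i-1}] \;=\; [E_i] - [Z_i] \;=\; \L \cdot [Z_i] \cdot (1 + \L + \ldots + \L^{c_i - 2}) \;=\; \L \cdot [Z_i \times \P^{c_i - 2}].
\end{equation*}
Here $Z_i \times \P^{c_i-2}$ is smooth projective of dimension $(d - c_i) + (c_i - 2) = d - 2$, which is precisely the output required by the lemma (with the convention that $\P^0$ is a point when $c_i = 2$, in which case the correction is simply $\L \cdot [Z_i]$).

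Summing these contributions with signs $\varepsilon_i \in \{+1,-1\}$ recording whether the $i$-th step of the factorization is a blowup or a blowdown, I obtain
\begin{equation*}
[X'] - [X] \;=\; \L \cdot \sum_i \varepsilon_i \, [Z_i \times \P^{c_i - 2}] \;=\; \L \cdot \mathcal{M},
\end{equation*}
which is the desired expression with $\mathcal{M}$ a $\Z$-linear combination of classes of smooth projective varieties of dimension $d-2$. The main obstacle is making sure weak factorization is available in the precise form needed: smooth projective intermediate varieties with centers of codimension at least $2$. Once this is granted, the remaining work is the elementary identity $(1 + \L + \ldots + \L^{c-1}) - 1 = \L(1 + \L + \ldots + \L^{c-2})$, which is what makes the factor of $\L$ appear and pins the dimension of the resulting pieces to exactly $d-2$.
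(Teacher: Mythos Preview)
The paper does not supply its own proof of this lemma; it merely cites \cite[Lemma 2.1]{SG} and uses the result as a black box, so there is no in-text argument to compare against. Your proof via weak factorization followed by the blowup relation $[X_i]-[X_{i-1}]=[E_i]-[Z_i]=\L\cdot[Z_i\times\P^{c_i-2}]$ is correct and is exactly the argument Galkin--Shinder give in the cited source, so you have faithfully reconstructed the intended proof.
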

This immediately gives the following rationality condition as a corollary:
\begin{corollary}
    If $X$ is a rational smooth $d$-dimensional variety, then
\begin{equation}
    [X] = [\P^d] + \L.\mathcal{M}_X
\end{equation}
where $\mathcal{M}_X$ is a linear combination of classes of smooth projective varieties of dimension $d - 2$.
\end{corollary}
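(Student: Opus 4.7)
The strategy is to observe that the corollary is an immediate application of the preceding lemma, with $\P^d$ playing the role of the auxiliary birational partner. First I would recall that $X$ being rational of dimension $d$ means by definition that $X$ is birationally equivalent to $\P^d$; since $\P^d$ is itself a smooth projective variety of dimension $d$, the pair $(X,\P^d)$ satisfies the hypotheses of the lemma \cite[Lemma 2.1]{SG} quoted just above.

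Next, I would apply that lemma directly to this pair, taking $X'=X$ and the second variety as $\P^d$. This yields an identity
\[
[X]-[\P^d]=\L\cdot\mathcal{M}
\]
in $K_0(Var/k)$, where $\mathcal{M}$ is an integer linear combination of classes of smooth projective varieties of dimension $d-2$. Rearranging and setting $\mathcal{M}_X:=\mathcal{M}$ yields exactly the stated formula $[X]=[\P^d]+\L\cdot\mathcal{M}_X$.

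There is essentially no obstacle here: the whole content of the corollary is a correct invocation of the lemma. The only point that deserves a brief remark is bookkeeping of signs, since the lemma is symmetric in $X$ and $X'$ only up to negating the integer coefficients in $\mathcal{M}$; but because $\mathcal{M}$ is allowed to be an arbitrary $\Z$-linear combination of classes of smooth projective $(d-2)$-folds, swapping the roles of $X$ and $X'$ causes no issue. Thus the claim follows for every smooth rational $d$-dimensional variety.
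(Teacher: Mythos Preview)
Your argument is correct and matches the paper's approach: the corollary is stated there as an immediate consequence of the preceding lemma applied to the birational pair $(X,\P^d)$, exactly as you have done. There is nothing more to add.
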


\begin{remark}
    Localizing with respect to a zero divisor: 
\end{remark}

Further, there is another open conjecture of Larsen and Lunts \cite{LL}.\\

\textbf{Conjecture}: If $[X]=[Y]$, then $X$ is birational to $Y$.\\

The above conjecture is trivially true for the example of Lesieutre as we have constructed varieties which are birational to each other. The conjecture has been proven true for Calabi-Yau's by Liu and Sebag in \cite[Corollary 1]{LS}. A weaker result in this direction was already proven  by Larsen and Lunts in \cite{LL}:

\begin{theorem}(\cite{LL})
    The quotient-ring $K_0(Var/k)/\L$ is naturally isomorphic to the free abelian group generated by classes of stable birational equivalence of smooth projective connected varieties together with its natural ring structure. In particular, if $X$ and $Y_1, \ldots, Y_m$ are smooth projective connected varieties and 
    \begin{equation*}
        [X] = \sum_{j=1}^{m}n_j[Y_j] \mod \L
    \end{equation*}
for some $n_j \in \Z$, then $X$ is stably birationally equivalent to one of the $Y_j$.
\end{theorem}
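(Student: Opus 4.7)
The plan is to construct mutually inverse ring homomorphisms between $K_0(Var/k)/\L$ and the free abelian group $\mathrm{SB}$ on stable birational equivalence classes of smooth projective connected varieties, where the product on $\mathrm{SB}$ is induced by $\times$.

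Working from Bittner's presentation, I would first define $\phi \colon K_0(Var/k) \to \mathrm{SB}$ on generators by sending a smooth projective $Y$ to its stable birational class $\langle Y \rangle$. To see the blow-up relation $[X]+[Z]=[Y]+[E]$ is respected, note that $X$ is birational (hence stably birational) to $Y$, and $E$ is a projective bundle over $Z$, hence stably birational to $Z$; both sides then map to $\langle Y \rangle + \langle Z \rangle$. Since $\L = [\P^1] - [\Spec k]$ with $\P^1$ rational, $\phi(\L) = 0$, so $\phi$ descends to a ring map $\bar{\phi}$ on the quotient $K_0(Var/k)/\L$.

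For the inverse $\psi \colon \mathrm{SB} \to K_0(Var/k)/\L$, I would send $\langle Y \rangle$ to $[Y] \bmod \L$. Well-definedness requires that any two stably birational smooth projective varieties agree modulo $\L$. For the stable factor, Lemma \ref{lemma2} gives $[\P^n] \equiv 1 \pmod{\L}$, so $[Y \times \P^n] \equiv [Y]$. For the birational factor, the Bittner relation $[X]+[Z] = [Y]+[E]$ with $[E] = [Z](1 + \L + \cdots + \L^{c-1}) \equiv [Z] \pmod{\L}$ collapses to $[X] \equiv [Y] \pmod{\L}$, and weak factorization transports this equality to any birational pair of smooth projectives. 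Multiplicativity of both maps is built into the constructions, and on generators $\psi \circ \bar\phi$ and $\bar\phi \circ \psi$ are clearly the identity, yielding the claimed ring isomorphism.

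For the ``in particular'' statement, apply $\bar\phi$ to the identity $[X] \equiv \sum_j n_j [Y_j] \pmod{\L}$ to obtain $\langle X \rangle = \sum_j n_j \langle Y_j \rangle$ inside the \emph{free} abelian group $\mathrm{SB}$. Since $\langle X \rangle$ is a basis element, it must actually appear on the right-hand side, forcing at least one $Y_j$ to be stably birational to $X$. The principal obstacle in this plan sits in invoking Bittner's presentation, which itself depends on the Weak Factorization Theorem of Abramovich--Karu--Matsuki--W\l odarczyk; once that deep input is granted, the remaining verifications are direct bookkeeping with the relations in $K_0(Var/k)$.
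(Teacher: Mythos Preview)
The paper does not supply a proof of this theorem; it is quoted as a result of Larsen--Lunts \cite{LL} and left unproved. Your argument is correct and is essentially the standard proof: use Bittner's blow-up presentation (equivalently, weak factorization) to build the map to $\mathrm{SB}$, check that blow-ups and the class $\L$ die, and construct the inverse by observing that smooth projective birational varieties agree modulo $\L$. One small point worth tightening: Bittner's generators are smooth projective varieties that need not be connected, so when you define $\phi$ you should send a disconnected $Y$ to the sum of the stable birational classes of its components (and dually check additivity over disjoint unions for $\psi$); once that bookkeeping is in place, the rest of your outline goes through exactly as written.
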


Recall that two smooth projective varieties $X$ and $Y$ are called \textbf{stably birationally equivalent} if for some $m, n \geq 1$,  $X \times \P^m$ is birationally equivalent to $Y \times \P^n$.

\begin{remark}
    Generally the notion of stable birational equivalence is weaker than birational equivalence. However, for varieties with non-negative Kodaira dimension these two notions are the same \cite[Lemma 2.6]{SG}.
\end{remark} 

\subsection{Comparison with K-equivalence}

Kawamata introduced K-equivalence in \cite{Kawa1}.

\begin{definition}
    Let $X$ and $Y$ be smooth projective varieties. They are called \textbf{$K$-equivalent} if they are birationally equivalent and if there exists a smooth projective variety $Z$ with birational morphisms $f : Z \ra X$ and $g: Z \ra Y$ such that the pull-backs of the canonical divisors are linearly equivalent: $f^*K_X \sim g^*K_Y$ .
\end{definition}

He further conjectured the following \\

\textbf{Kawamata's DK-Conjecture:} \cite{Kawa1} Let X and Y be birationally equivalent smooth projective
varieties. Then the following are equivalent.
\begin{enumerate}
    \item There exists an equivalence of triangulated categories $D^b(X) \cong D^b(Y)$.
    \item There exists a smooth projective variety Z and birational morphisms $f : Z \ra X$ and $g : Z \ra Y$ such that $f^*K_X \sim g^*K_Y$.
\end{enumerate}
In other words,for birational varieties D-equivalence is the same as K-equivalence. For a survey of the current known examples for this conjecture along with its more general form, see \cite{Kawa2}.

The above example of Lesieture satisfies the DK-conjecture as well. This can be seen easily by taking $Z$ to be the blowup of $\P^3$ at all the distinct points in the sets $\textbf{p}$ and $\textbf{q}$ of $X_{\textbf{p}}$ and $X_{\textbf{q}}$. The maps $f$ and $q$ are just the respective blow downs. Note that pullback of the canonical divisors of $X_{\textbf{p}}$ and $X_{\textbf{q}}$ are the same. 

\section*{Acknowledgements}
This work was prepared for talk at 38th Annual Conference of
Ramanujan Mathematical Society, 2023 and is supported by the SERB POWER grant No. SPG/2021/002307.


\begin{thebibliography}{xx}

\bibitem{Bittner} Bittner F., {\it The universal Euler characteristic for varieties of characteristic zero}, Compos Math. 140(4), 1011-1032, 2004.

\bibitem{Lzero} Borisov L., {\it Class of the affine line is a zero divisor in the Grothendieck ring}, arXiv:1412.6194.

\bibitem{Grothendieck-serre} Colmez P., Serre J.P., {\it Grothendieck-Serre Correspondence: Bilingual Edition}, American Mathematical Society, 2022.

\bibitem{debarre} Debarre, O, {\it Higher-dimensional algebraic geometry}, 1st Edition, Universitext, Springer, 2001. 

\bibitem{Efimov} Efimov A.I., {\it Some remarks on L-equivalence of algebraic varieties}, Sel. Math. New Series, 24:3753-3762, 2018.
 
\bibitem{SG} Galkin, S., Shinder, E., {\it The Fano variety of lines and rationality problem for a cubic hypersurface}, arXiv: 1405.5154, 2014.
\bibitem{Hartshorne} Hartshorne R., {\it Algebraic Geometry}, Graduate Text in Mathematics 52, Springer, 1977.


\bibitem{Lzero3} Ito, A., Miura, M., Okawa, S.,  Ueda, K., {\it The class of the affine line is a zero divisor in the Grothendieck ring: Via $G_2$-Grassmannians}, J. Algebraic Geom. 28 (2019), 245-250. DOI: \url{https://doi.org/10.1090/jag/731}
\bibitem{Ito} Ito, A., Miura, M., Okawa, S., Ueda, K., {\it Derived equivalence and Grothendieck ring of varieties: the case of K3 surfaces of degree 12 and abelian varieties.} Sel. Math. New Ser. 26, 38 (2020). \url{https://doi.org/10.1007/s00029-020-00561-x}
\bibitem{Kawa1} Kawamata, Y. {\it D-equivalence and K-equivalence}, J. Differential Geom., 61, no. 1, 147–171, 2002. 
\bibitem{Kawa2} Kawamata Y., {\it Birational geometry and derived categories}, Surveys in Differential Geometry, 22(1):291–317, 2017.
\bibitem{KS} Kuzentsov A., Shinder E., {\it Grothendieck ring of varieties, D- and L-equivalence, and families of quadrics}, Sel. Math. New Series, 2017. 


\bibitem{LL} M. Larsen, V. Lunts, {\it Motivic measures and stable birational geometry}, Moscow Mathematical Journal, Vol.3 (1), 2003, 85–95.
\bibitem{Lesi} Lesieture J., {\it Derived-Equivalent Rational Threefolds}, International Mathematics Research Notices, Volume 2015, Issue 15, 2015, Pages 6011–6020, \url{https://doi.org/10.1093/imrn/rnu117}.
\bibitem{LS} Liu Q., Sebag J., {\it The Grothendieck ring of varieties and piecewise isomorphisms}, Math Z., 265, 321-342, 2010.
\bibitem{Looi} Looijenga, E., {\it Motivic measures}, Seminaire Bourbaki 42, 267-297 (1999-2000).

\bibitem{Lzero2} Martin N., {\it The class of the affine line is a zero divisor in the Grothendieck ring: an improvement}, Comptes Rendus Mathematique, Volume 354, Issue 9, 936-939, 2016.
\url{https://doi.org/10.1016/j.crma.2016.05.016.}
\bibitem{Kap} Mustaţă M., {\it Lecture 8 in Lecture notes on Zeta-functions in Algebraic Geometry}, \url{https://dept.math.lsa.umich.edu/~mmustata/lecture8.pdf}


\bibitem{Ponnen} Poonen B., {\it The Grothendieck ring of varieties is not a domain}, Math. Res. Letters, 9(4):493-498, 2002.
\end{thebibliography}
\end{document}